\newtheorem{theorem}{Theorem}[section]
\newtheorem{corollary}[theorem] {Corollary}
\newtheorem{definition}[theorem]{Definition}
\newtheorem{problem}[theorem]{Problem}
\newtheorem{question}[theorem]{Question}
\title{This is the title}
\begin{document}
\hrule\hrule\hrule\hrule\hrule
\vspace{0.3cm}	
\begin{center}
{\bf{FUNCTIONAL DONOHO-STARK-ELAD-BRUCKSTEIN-RICAUD-TORR\'{E}SANI UNCERTAINTY PRINCIPLE}}\\
\vspace{0.3cm}
\hrule\hrule\hrule\hrule\hrule
\vspace{0.3cm}
\textbf{K. MAHESH KRISHNA}\\
School of Mathematics and Natural Sciences\\
Chanakya University Global Campus\\
NH-648, Haraluru Village\\
Devanahalli Taluk, 	Bengaluru  North District\\
Karnataka State 562 110 India \\
Email: kmaheshak@gmail.com\\

Date: \today
\end{center}

\hrule\hrule
\vspace{0.5cm}
\textbf{Abstract}: Let $(\{f_j\}_{j=1}^n, \{\tau_j\}_{j=1}^n)$ and $(\{g_k\}_{k=1}^m, \{\omega_k\}_{k=1}^m)$ be  p-Schauder frames  for a finite dimensional Banach space $\mathcal{X}$. Then for every $x \in \mathcal{X}\setminus\{0\}$,  we show that 
\begin{align}\label{UE}
	\|\theta_f x\|_0^\frac{1}{p}\|\theta_g x\|_0^\frac{1}{q} \geq 	\frac{1}{\displaystyle\max_{1\leq j\leq n, 1\leq k\leq m}|f_j(\omega_k)|}\quad \text{and} 	\quad \|\theta_g x\|_0^\frac{1}{p}\|\theta_f x\|_0^\frac{1}{q}\geq \frac{1}{\displaystyle\max_{1\leq j\leq n, 1\leq k\leq m}|g_k(\tau_j)|},
\end{align}
where 
\begin{align*}
\theta_f: \mathcal{X} \ni x \mapsto (f_j(x) )_{j=1}^n \in \ell^p([n]); \quad \theta_g: \mathcal{X} \ni x \mapsto (g_k(x) )_{k=1}^m \in \ell^p([m])	
\end{align*}
and $q$ is the conjugate index of $p$. We call Inequality (\ref{UE}) as \textbf{Functional Donoho-Stark-Elad-Bruckstein-Ricaud-Torr\'{e}sani Uncertainty Principle}. Inequality (\ref{UE}) improves Ricaud-Torr\'{e}sani uncertainty principle \textit{[IEEE Trans. Inform. Theory, 2013]}. In particular, it improves  Elad-Bruckstein uncertainty principle \textit{[IEEE Trans. Inform. Theory, 2002]} and Donoho-Stark uncertainty principle \textit{[SIAM J. Appl. Math., 1989]}.

\textbf{Keywords}:   Uncertainty Principle, Orthonormal Basis, Parseval Frame, Hilbert space, Banach space.

\textbf{Mathematics Subject Classification (2020)}: 42C15.\\

\hrule

\hrule
\section{Introduction}
While Heisenberg's uncertainty principle \cite{HEISENBERG} (English translation of original 1927 paper) is one of the greatest inequalities in the first half of the 20 century, Donoho-Stark uncertainty principle \cite{DONOHOSTARK} is one of the greatest inequalities in the second half of the 20 century. For $h \in \mathbb{C}^d$, let $\|h\|_0$ be the number of nonzero entries in $h$. Let $\hat{}: \mathbb{C}^d \to  \mathbb{C}^d$ be the Fourier transform.
\begin{theorem} (\textbf{Donoho-Stark Uncertainty Principle}) \cite{DONOHOSTARK} \label{DS}
	For every $d\in \mathbb{N}$, 
	\begin{align}\label{DSE}
  \left(\frac{\|h\|_0+\|\widehat{h}\|_0}{2}\right)^2\geq \|h\|_0\|\widehat{h}\|_0	\geq d, \quad \forall h \in \mathbb{C}^d\setminus \{0\}.
	\end{align}
\end{theorem}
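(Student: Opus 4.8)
The first inequality is just the AM--GM inequality applied to the two nonnegative reals $\|h\|_0$ and $\|\widehat h\|_0$, so the whole content is the bound $\|h\|_0\,\|\widehat h\|_0 \ge d$, and I would prove that by playing the three norms $\|\cdot\|_1$, $\|\cdot\|_2$, $\|\cdot\|_\infty$ against each other through the two fundamental mapping properties of the discrete Fourier transform: the Plancherel identity $\|h\|_2 = \|\widehat h\|_2$ and the trivial uniform bound $\|\widehat h\|_\infty \le d^{-1/2}\|h\|_1$ (with the unitary normalization of $\widehat{\,\cdot\,}$). Fix $h \in \mathbb{C}^d \setminus\{0\}$, so that $\|h\|_2 = \|\widehat h\|_2 \neq 0$.

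The plan is as follows. First, bound the $\ell^2$ norm of $\widehat h$ by its $\ell^\infty$ norm on its support: since $\widehat h$ has exactly $\|\widehat h\|_0$ nonzero coordinates, Cauchy--Schwarz (or simply counting) gives $\|\widehat h\|_2 \le \sqrt{\|\widehat h\|_0}\,\|\widehat h\|_\infty$. Second, insert the Fourier bound $\|\widehat h\|_\infty \le d^{-1/2}\|h\|_1$. Third, apply the same support-counting Cauchy--Schwarz step to $h$ itself, namely $\|h\|_1 \le \sqrt{\|h\|_0}\,\|h\|_2$. Chaining these three inequalities yields
\begin{align*}
\|\widehat h\|_2 \;\le\; \sqrt{\|\widehat h\|_0}\,\|\widehat h\|_\infty \;\le\; \frac{1}{\sqrt d}\sqrt{\|\widehat h\|_0}\,\|h\|_1 \;\le\; \frac{1}{\sqrt d}\sqrt{\|\widehat h\|_0}\,\sqrt{\|h\|_0}\,\|h\|_2 .
\end{align*}
Finally, use Plancherel to replace $\|h\|_2$ by $\|\widehat h\|_2$ on the right-hand side and divide by the nonzero quantity $\|\widehat h\|_2$; this leaves $1 \le d^{-1/2}\sqrt{\|\widehat h\|_0\,\|h\|_0}$, i.e. $\|h\|_0\,\|\widehat h\|_0 \ge d$, which is the desired estimate. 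The left-hand inequality $\big(\tfrac{\|h\|_0+\|\widehat h\|_0}{2}\big)^2 \ge \|h\|_0\,\|\widehat h\|_0$ then follows immediately.

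There is no serious obstacle here — the argument is short and each step is elementary. The only point requiring a little care is the normalization of the Fourier transform: one must use the version for which Plancherel holds isometrically and for which $|\widehat h(k)| \le d^{-1/2}\sum_j |h(j)|$, so that the two key inequalities are simultaneously available with matching constants; with a different normalization the factor of $d$ would be shifted but the principle is the same. It is also worth noting in the write-up that this proof is exactly the template that the functional generalization in this paper will abstract: the role of Plancherel is played by the frame reconstruction identity, and the role of $\|\widehat h\|_\infty \le d^{-1/2}\|h\|_1$ is played by the uniform bound on the cross-Gram quantities $|f_j(\omega_k)|$, with Cauchy--Schwarz replaced by Hölder's inequality between the conjugate indices $p$ and $q$.
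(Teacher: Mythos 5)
Your argument is correct: with the unitary normalization of the discrete Fourier transform one has Plancherel and $\|\widehat h\|_\infty \le d^{-1/2}\|h\|_1$, and the chain $\|\widehat h\|_2 \le \sqrt{\|\widehat h\|_0}\,\|\widehat h\|_\infty \le d^{-1/2}\sqrt{\|\widehat h\|_0}\,\|h\|_1 \le d^{-1/2}\sqrt{\|\widehat h\|_0\,\|h\|_0}\,\|h\|_2$ together with $\|h\|_2=\|\widehat h\|_2\neq 0$ gives $\|h\|_0\,\|\widehat h\|_0\ge d$, the AM--GM step handling the outer inequality. However, this is not how the paper treats the statement: Theorem \ref{DS} is quoted from the literature and never proved directly; the paper's route is to observe that it follows from Theorem \ref{EB} (and ultimately from Theorem \ref{FDS}), by taking the two orthonormal bases to be the standard basis and the Fourier basis of $\mathbb{C}^d$, for which $|\langle\tau_j,\omega_k\rangle|=1/\sqrt{d}$ for all $j,k$, so that the lower bound $1/\max_{j,k}|\langle\tau_j,\omega_k\rangle|^2$ specializes to $d$. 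Your direct proof buys a short, self-contained argument at the level of $\mathbb{C}^d$, while the paper's derivation buys generality; and, as you yourself note, your three steps (Plancherel, the uniform cross bound, Cauchy--Schwarz over supports) are precisely the template that the proof of Theorem \ref{FDS} abstracts, with the reconstruction identity of a p-Schauder frame replacing Fourier inversion, $\max_{j,k}|f_j(\omega_k)|$ replacing $d^{-1/2}$, and H\"older's inequality between conjugate indices $p$ and $q$ replacing Cauchy--Schwarz. So there is no gap, only a genuinely different (and more elementary, but less general) path to the same inequality.
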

In 2002, Elad and Bruckstein generalized Inequality (\ref{DSE})  to pairs of orthonormal bases \cite{ELADBRUCKSTEIN}. To state the result we need some notations. Given a collection $\{\tau_j\}_{j=1}^n$ in a finite dimensional Hilbert space $\mathcal{H}$ over $\mathbb{K}$ ($\mathbb{R}$ or $\mathbb{C}$), we define 
\begin{align*}
	\theta_\tau: \mathcal{H} \ni h \mapsto \theta_\tau h \coloneqq (\langle h, \tau_j\rangle)_{j=1}^n \in \mathbb{K} ^n.
\end{align*}
\begin{theorem} (\textbf{Elad-Bruckstein Uncertainty Principle}) \cite{ELADBRUCKSTEIN} \label{EB}
	Let $\{\tau_j\}_{j=1}^n$,  $\{\omega_j\}_{j=1}^n$ be two orthonormal bases for a  finite dimensional Hilbert space $\mathcal{H}$. Then 
	\begin{align*}
	\left(\frac{\|\theta_\tau h\|_0+\|\theta_\omega h\|_0}{2}\right)^2	\geq \|\theta_\tau h\|_0\|\theta_\omega h\|_0\geq \frac{1}{\displaystyle\max_{1\leq j, k \leq n}|\langle\tau_j, \omega_k \rangle|^2}, \quad \forall h \in \mathcal{H}\setminus \{0\}.
	\end{align*}
\end{theorem}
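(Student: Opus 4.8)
The plan is to prove the Elad--Bruckstein uncertainty principle for two orthonormal bases $\{\tau_j\}_{j=1}^n$ and $\{\omega_k\}_{k=1}^n$ of a finite dimensional Hilbert space $\mathcal{H}$. The left inequality $\left(\frac{a+b}{2}\right)^2 \geq ab$ is just the AM--GM inequality applied to $a = \|\theta_\tau h\|_0$ and $b = \|\theta_\omega h\|_0$, so the entire content is the right inequality $\|\theta_\tau h\|_0 \, \|\theta_\omega h\|_0 \geq \big(\max_{j,k}|\langle \tau_j,\omega_k\rangle|\big)^{-2}$. The core idea is to exploit the change-of-basis relation between the two representations of $h$ together with the sparsity counts, and to isolate a single coefficient of maximal modulus to extract the product of the supports.

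First I would fix $h \in \mathcal{H}\setminus\{0\}$ and write its expansion in the second orthonormal basis, $h = \sum_{k=1}^n \langle h,\omega_k\rangle \omega_k$. Taking the inner product with an arbitrary $\tau_j$ gives
\begin{align*}
\langle h,\tau_j\rangle = \sum_{k=1}^n \langle h,\omega_k\rangle \langle \omega_k,\tau_j\rangle,
\end{align*}
which expresses each entry of $\theta_\tau h$ in terms of the entries of $\theta_\omega h$ via the unitary cross-Gram matrix with entries $\langle \omega_k,\tau_j\rangle$. Next I would select an index $j_0$ that maximizes $|\langle h,\tau_j\rangle|$; since $h\neq 0$ this maximum is strictly positive, and in fact $\|\theta_\tau h\|_\infty = |\langle h,\tau_{j_0}\rangle|$. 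Denoting $M \coloneqq \max_{1\leq j,k\leq n}|\langle \tau_j,\omega_k\rangle|$, the key estimate is to bound this maximal entry: only the $\|\theta_\omega h\|_0$ nonzero terms contribute to the sum, so by the triangle inequality
\begin{align*}
\|\theta_\tau h\|_\infty = |\langle h,\tau_{j_0}\rangle| \leq \sum_{k:\,\langle h,\omega_k\rangle\neq 0} |\langle h,\omega_k\rangle|\,|\langle \omega_k,\tau_{j_0}\rangle| \leq M \sum_{k:\,\langle h,\omega_k\rangle\neq 0} |\langle h,\omega_k\rangle| \leq M\,\|\theta_\omega h\|_0\,\|\theta_\omega h\|_\infty.
\end{align*}
The last step uses that each summand is at most $\|\theta_\omega h\|_\infty$ and there are $\|\theta_\omega h\|_0$ of them. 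By the symmetric argument (expanding $h$ in the $\tau$-basis and testing against $\omega_{k_0}$) one obtains $\|\theta_\omega h\|_\infty \leq M\,\|\theta_\tau h\|_0\,\|\theta_\tau h\|_\infty$.

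Finally I would multiply the two inequalities together. Since both bases are orthonormal, Parseval's identity guarantees $\|\theta_\tau h\|_\infty$ and $\|\theta_\omega h\|_\infty$ are strictly positive for $h\neq 0$, so these factors cancel cleanly from the product, leaving
\begin{align*}
1 \leq M^2\,\|\theta_\tau h\|_0\,\|\theta_\omega h\|_0,
\end{align*}
which rearranges to the desired bound $\|\theta_\tau h\|_0\,\|\theta_\omega h\|_0 \geq M^{-2}$. The main subtlety to handle carefully is ensuring the cancellation is legitimate, i.e.\ that $\|\theta_\tau h\|_\infty > 0$ and $\|\theta_\omega h\|_\infty > 0$; this is where orthonormality (via completeness of each basis, so that $\theta_\tau$ and $\theta_\omega$ are injective) is essential, and it is the only place the full orthonormal-basis hypothesis is used beyond the normalization implicit in the Gram bound. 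Everything else is elementary estimation, so I expect no serious obstacle once the selection of the maximal coordinate and the two symmetric bounds are set up correctly.
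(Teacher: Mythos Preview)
Your argument is correct. The AM--GM step is trivial, and your two symmetric $\ell^\infty$-bounds followed by multiplication and cancellation are the classical Elad--Bruckstein argument; the positivity of $\|\theta_\tau h\|_\infty$ and $\|\theta_\omega h\|_\infty$ for $h\neq 0$ is exactly what makes the cancellation legitimate, and you flagged this.

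However, the route differs from the paper's. The paper does not prove Theorem~\ref{EB} directly; it derives it as the $p=q=2$ specialisation of Theorem~\ref{FDS} (via Corollary showing Theorem~\ref{RT} follows, orthonormal bases being Parseval frames). That proof works with the full $\ell^2$-norm rather than a single maximal coordinate: one writes $\|h\|^2=\sum_{j\in\operatorname{supp}(\theta_\tau h)}|\langle h,\tau_j\rangle|^2$, substitutes the $\omega$-expansion inside, bounds by $M^2\,\|\theta_\tau h\|_0\bigl(\sum_{k\in\operatorname{supp}(\theta_\omega h)}|\langle h,\omega_k\rangle|\bigr)^2$, and then applies Cauchy--Schwarz (the $p=2$ case of H\"older) to the inner $\ell^1$-sum to produce the factor $\|\theta_\omega h\|_0\,\|h\|^2$, after which $\|h\|^2$ cancels. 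This yields the product bound in a single pass, without the symmetric second estimate or the $\ell^\infty$-cancellation trick. Your approach is arguably cleaner for the orthonormal-basis case and avoids H\"older entirely, but the paper's $\ell^p$-norm computation is what allows the generalisation to $p$-Schauder frames on Banach spaces, where the two inequalities in \eqref{FDSUNCE} are genuinely asymmetric in $p$ and $q$ and your multiplication trick would not directly separate them.
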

Note that Theorem \ref{DS} follows from Theorem \ref{EB}. In 2013, Ricaud and Torr\'{e}sani  showed that orthonormal bases in Theorem \ref{EB} can be improved to Parseval frames \cite{RICAUDTORRESANI}.
\begin{theorem} (\textbf{Ricaud-Torr\'{e}sani Uncertainty Principle}) \cite{RICAUDTORRESANI} \label{RT}
	Let $\{\tau_j\}_{j=1}^n$,  $\{\omega_j\}_{j=1}^n$ be two Parseval frames   for a  finite dimensional Hilbert space $\mathcal{H}$. Then 
\begin{align*}
	\left(\frac{\|\theta_\tau h\|_0+\|\theta_\omega h\|_0}{2}\right)^2	\geq \|\theta_\tau h\|_0\|\theta_\omega h\|_0\geq \frac{1}{\displaystyle\max_{1\leq j, k \leq n}|\langle\tau_j, \omega_k \rangle|^2}, \quad \forall h \in \mathcal{H}\setminus \{0\}.
		\end{align*}	
\end{theorem}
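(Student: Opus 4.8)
The plan is to run the classical $\ell^1$--$\ell^2$ support-counting argument, using only that a Parseval frame reproduces every vector and preserves norms. Fix $h\in\mathcal{H}\setminus\{0\}$ and set $S\coloneqq\{j:\langle h,\tau_j\rangle\neq 0\}$ and $T\coloneqq\{k:\langle h,\omega_k\rangle\neq 0\}$, so that $|S|=\|\theta_\tau h\|_0$ and $|T|=\|\theta_\omega h\|_0$; write $M\coloneqq\max_{1\le j,k\le n}|\langle\tau_j,\omega_k\rangle|$. Because $\{\tau_j\}_{j=1}^n$ is a Parseval frame we may use both the reconstruction identity $h=\sum_{j=1}^n\langle h,\tau_j\rangle\tau_j=\sum_{j\in S}\langle h,\tau_j\rangle\tau_j$ and Parseval's energy identity $\|h\|^2=\sum_{j=1}^n|\langle h,\tau_j\rangle|^2$, and likewise for $\{\omega_k\}_{k=1}^n$.

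The key step is to bound each coordinate $\langle h,\omega_k\rangle$ by the $\tau$-side data. Expanding $h$ via its $\tau$-reconstruction and pulling out the largest cross inner product gives, for every $k$,
\begin{align*}
|\langle h,\omega_k\rangle|=\Bigl|\sum_{j\in S}\langle h,\tau_j\rangle\langle\tau_j,\omega_k\rangle\Bigr|\le M\sum_{j\in S}|\langle h,\tau_j\rangle|\le M\sqrt{|S|}\Bigl(\sum_{j\in S}|\langle h,\tau_j\rangle|^2\Bigr)^{1/2}=M\sqrt{|S|}\,\|h\|,
\end{align*}
where the second inequality is Cauchy--Schwarz on the index set $S$ and the last equality is Parseval's identity for $\{\tau_j\}$. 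Squaring and summing over $k\in T$, and using Parseval's identity for $\{\omega_k\}$,
\begin{align*}
\|h\|^2=\sum_{k\in T}|\langle h,\omega_k\rangle|^2\le |T|\,M^2|S|\,\|h\|^2 .
\end{align*}
Cancelling $\|h\|^2>0$ yields $\|\theta_\tau h\|_0\,\|\theta_\omega h\|_0=|S|\,|T|\ge 1/M^2$, which is the right-hand inequality, and the left-hand inequality is the elementary estimate $\bigl(\tfrac{a+b}{2}\bigr)^2\ge ab$ with $a=\|\theta_\tau h\|_0$, $b=\|\theta_\omega h\|_0$.

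I do not anticipate a genuine obstacle: the argument only invokes the Parseval reconstruction formula, Parseval's identity, and Cauchy--Schwarz. The one point demanding care is to restrict the reconstruction sum to $S$ (and the energy sum to $T$) \emph{before} applying Cauchy--Schwarz, so that it is the cardinality of the support---not of the whole frame---that enters the final bound. For the functional statement (\ref{UE}) I expect the same skeleton to apply after replacing the Parseval reconstruction by the $p$-Schauder reconstruction $x=\sum_j f_j(x)\tau_j$, Cauchy--Schwarz by Hölder's inequality with conjugate exponents $p$ and $q$, and the Parseval energy identities by the norm identities $\|\theta_f x\|=\|\theta_g x\|$ furnished by the $p$-Schauder frame structure; the appearance of $1/p$ and $1/q$ in (\ref{UE}) is exactly what the Hölder step produces, and the two inequalities there are the two ways of choosing which frame plays the role of $\{\tau_j\}$.
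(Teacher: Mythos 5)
Your proof is correct and is essentially the paper's argument: the paper establishes Theorem \ref{RT} by specializing Theorem \ref{FDS} (take $f_j(h)=\langle h,\tau_j\rangle$, $g_k(h)=\langle h,\omega_k\rangle$, $p=q=2$), and the proof of Theorem \ref{FDS} is exactly your scheme --- restrict the reconstruction and energy sums to the supports, pull out the largest cross term $|f_j(\omega_k)|$, and apply H\"{o}lder, which at $p=q=2$ is your Cauchy--Schwarz step together with the Parseval identities. Your closing remark about how the argument generalizes matches the paper's actual proof of (\ref{UE}).
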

Following is famous  well known  application of Theorem \ref{EB}.
\begin{theorem} \cite{ELADBOOK, ELADBRUCKSTEIN} \label{APP}
	Let $\{\tau_j\}_{j=1}^n$,  $\{\omega_j\}_{j=1}^n$ be two orthonormal bases for a  finite dimensional Hilbert space $\mathcal{H}$. If $h \in \mathcal{H}$ can be written as 
	\begin{align*}
		h=\theta_\tau^*u=\theta_\tau^*v \quad \text{ for some } u, v \in \mathbb{K}^n, 
	\end{align*}
then 
\begin{align*}
	\left(\frac{\|u\|_0+\|v\|_0}{2}\right)^2	\geq	\|u\|_0\|v\|_0\geq \frac{1}{\displaystyle\max_{1\leq j, k \leq n}|\langle\tau_j, \omega_k \rangle|^2}.
\end{align*}
\end{theorem}

In this paper, we derive uncertainty principle for finite dimensional Banach spaces which contains Theorem \ref{RT} as a particular case. As  an, application we also derive Banach space version of Theorem \ref{APP}.

\textbf{Motivation}: Eventhough uncertainty principles for Hilbert spaces are in study from 100 years, uncertainty principles for abstract Banach spaces started in the beginning of 21 century only. Following are known uncertainty principles for Banach spaces. 
\begin{enumerate}
	\item  In 2006, Goh and Goodman extended Smith uncertainty principle from locally compact Abelian groups to Banach space \cite{GOHGOODMAN, SMITH}.
	\item In 2024, Krishna  extended Deutsch uncertainty principle from Hilbert spaces to Banach spaces \cite{KRISHNA3, DEUTSCH}.
	\item In 2024, Krishna extended Ghobber-Jaming uncertainty principle from Hilbert spaces to Banach spaces \cite{KRISHNA3, GHOBBERJAMING}.
\end{enumerate}

\section{Functional Donoho-Stark-Elad-Bruckstein-Ricaud-Torr\'{e}sani  Uncertainty Principle}
In the paper,   $\mathbb{K}$ denotes $\mathbb{C}$ or $\mathbb{R}$ and $\mathcal{X}$ denotes a  finite dimensional Banach space over $\mathbb{K}$. Identity operator on $\mathcal{X}$ is denoted by $I_\mathcal{X}$. Dual of $\mathcal{X}$ is denoted by $\mathcal{X}^*$. Whenever $1<p<\infty$, $q$ denotes conjugate index of $p$. For $d \in \mathbb{N}$, standard finite dimensional Banach space $\mathbb{K}^d$ over $\mathbb{K}$ equipped with standard $\|\cdot\|_p$ norm is denoted by $\ell^p([d])$. Canonical basis for $\mathbb{K}^d$ is denoted by $\{\delta_j\}_{j=1}^d$ and $\{\zeta_j\}_{j=1}^d$ be the coordinate functionals associated with $\{\delta_j\}_{j=1}^d$. We need the following variant of p-approximate Schauder frames defined by Krishna and Johnson in \cite{KRISHNAJOHNSON}. These are sub-classes of Schauder frames which have various applications, see \cite{CASAZZADILWORTH, DAUBECHIESDEVORE}.
\begin{definition}\label{PSF}
	Let $\mathcal{X}$  be a  finite dimensional Banach space over $\mathbb{K}$.   Let $\{\tau_j\}_{j=1}^n$ be a collection in  $\mathcal{X}$ and 	$\{f_j\}_{j=1}^n$ be a sequence in  $\mathcal{X}^*.$ The pair $(\{f_j\}_{j=1}^n, \{\tau_j\}_{j=1}^n)$ is said to be a \textbf{p-Schauder frame} ($1<p <\infty$) for $\mathcal{X}$ if  the following conditions hold.
	\begin{enumerate}[\upshape(i)]
		\item For every $x \in \mathcal{X}$, 
			\begin{align*}
			\|x\|^p=\sum_{j=1}^n|f_j(x)|^p.
		\end{align*}
		\item For every $x \in \mathcal{X}$, 
		\begin{align*}
		x=\sum_{j=1}^nf_j(x)\tau_j.
		\end{align*}
		\end{enumerate}
\end{definition}
We easily see that condition (i) in Definition \ref{PSF} says that the map 
\begin{align*}
\theta_f: \mathcal{X} \ni x \mapsto (f_j(x) )_{j=1}^n \in \ell^p([n])	
\end{align*}
is a linear isometry. Like Holub's characterization of  frames for Hilbert spaces \cite{HOLUB}, following theorem characterizes p-Schauder frames.
\begin{theorem}\label{THAFSCHAR}
	A pair  $(\{f_j\}_{j=1}^n, \{\tau_j\}_{j=1}^n)$ is a p-Schauder frame   for 	$\mathcal{X}$ if and only if 
	\begin{align*}
		f_j=\zeta_j U, \quad \tau_j=V\delta_j, \quad \forall j=1, \dots, n,
	\end{align*}  
	where $U:\mathcal{X} \rightarrow\ell^p([n])$, $ V: \ell^p([n])\to \mathcal{X}$ are linear operators such that $VU=I_\mathcal{X}$ and $U$ is an isometry.
\end{theorem}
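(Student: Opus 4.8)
The plan is to prove both implications directly, by manufacturing the operators $U$ and $V$ from the frame data in one direction and reading off the two frame conditions from the operator identities in the other.

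For the forward direction, suppose $(\{f_j\}_{j=1}^n, \{\tau_j\}_{j=1}^n)$ is a p-Schauder frame. I would take $U \coloneqq \theta_f$, that is $Ux \coloneqq (f_j(x))_{j=1}^n$, which is linear and, by condition (i) of Definition \ref{PSF}, satisfies $\|Ux\|_p^p=\sum_{j=1}^n|f_j(x)|^p=\|x\|^p$, hence is an isometry of $\mathcal{X}$ into $\ell^p([n])$. Applying the $j$-th coordinate functional gives $\zeta_j U x = f_j(x)$, so $f_j=\zeta_j U$. Next I would define $V\colon \ell^p([n])\to\mathcal{X}$ by $V\big((a_j)_{j=1}^n\big)\coloneqq \sum_{j=1}^n a_j\tau_j$; this is linear (and automatically bounded, the spaces being finite dimensional) and $V\delta_j=\tau_j$. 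Finally, for $x\in\mathcal{X}$, condition (ii) gives $VUx=V\big((f_j(x))_{j=1}^n\big)=\sum_{j=1}^n f_j(x)\tau_j=x$, so $VU=I_\mathcal{X}$.

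For the converse, suppose $U\colon\mathcal{X}\to\ell^p([n])$ is a linear isometry, $V\colon\ell^p([n])\to\mathcal{X}$ is linear with $VU=I_\mathcal{X}$, and set $f_j\coloneqq \zeta_j U\in\mathcal{X}^*$ and $\tau_j\coloneqq V\delta_j\in\mathcal{X}$. For every $x\in\mathcal{X}$, using the canonical expansion $Ux=\sum_{j=1}^n(\zeta_j Ux)\delta_j$ in $\ell^p([n])$, I get $\sum_{j=1}^n|f_j(x)|^p=\sum_{j=1}^n|\zeta_j Ux|^p=\|Ux\|_p^p=\|x\|^p$, which is condition (i); and $\sum_{j=1}^n f_j(x)\tau_j=\sum_{j=1}^n(\zeta_j Ux)V\delta_j=V\big(\sum_{j=1}^n(\zeta_j Ux)\delta_j\big)=V(Ux)=x$, which is condition (ii). Hence $(\{f_j\}_{j=1}^n,\{\tau_j\}_{j=1}^n)$ is a p-Schauder frame.

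Because both spaces are finite dimensional, there are no convergence or boundedness subtleties, so I do not anticipate a genuine obstacle. The only point needing a little care is keeping straight the roles of the canonical basis $\{\delta_j\}_{j=1}^n$ and its coordinate functionals $\{\zeta_j\}_{j=1}^n$, and observing that it is precisely the decomposition $Ux=\sum_{j=1}^n(\zeta_j Ux)\delta_j$ that bridges condition (i) (an isometry statement about $U$ alone) and condition (ii) (a reconstruction statement involving $V$).
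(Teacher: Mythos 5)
Your proof is correct: the forward direction with $U=\theta_f$ and $V$ the synthesis map $(a_j)_{j=1}^n\mapsto\sum_{j=1}^n a_j\tau_j$, and the converse reading conditions (i) and (ii) off from $\|Ux\|_p=\|x\|$ and $VUx=x$ via the expansion $Ux=\sum_{j=1}^n(\zeta_jUx)\delta_j$, is exactly the standard analysis/synthesis argument this characterization rests on. The paper itself states Theorem \ref{THAFSCHAR} without proof (citing the analogy with Holub's characterization), so your write-up supplies precisely the intended argument; there is no gap.
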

Following is the crucial result of this paper. 
\begin{theorem} \label{FDS}(\textbf{Functional Donoho-Stark-Elad-Bruckstein-Ricaud-Torr\'{e}sani Uncertainty Principle}) 
Let $(\{f_j\}_{j=1}^n, \{\tau_j\}_{j=1}^n)$ and $(\{g_k\}_{k=1}^m, \{\omega_k\}_{k=1}^m)$ be  p-Schauder frames  for a Banach space $\mathcal{X}$. Then for every $x \in \mathcal{X}\setminus\{0\}$,  we have 
	\begin{align}\label{FDSUNCE}
		\|\theta_f x\|_0^\frac{1}{p}\|\theta_g x\|_0^\frac{1}{q} \geq 	\frac{1}{\displaystyle\max_{1\leq j\leq n, 1\leq k\leq m}|f_j(\omega_k)|}\quad \text{and} \quad \|\theta_g x\|_0^\frac{1}{p}\|\theta_f x\|_0^\frac{1}{q}\geq \frac{1}{\displaystyle\max_{1\leq j\leq n, 1\leq k\leq m}|g_k(\tau_j)|}.
	\end{align}
\end{theorem}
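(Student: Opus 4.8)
The plan is to expand the analysis coefficients of one frame through the synthesis (reconstruction) formula of the other, and then to control the resulting sum by H\"older's inequality restricted to the support of the coefficient sequence. Only properties (i) and (ii) of Definition \ref{PSF} are needed, so Theorem \ref{THAFSCHAR} is not required for this argument.

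First I would fix $x \in \mathcal{X} \setminus \{0\}$ and set $S := \{\, j \in [n] : f_j(x) \neq 0 \,\}$ and $T := \{\, k \in [m] : g_k(x) \neq 0 \,\}$, so that $\|\theta_f x\|_0 = |S|$ and $\|\theta_g x\|_0 = |T|$; note $S$ and $T$ are nonempty because $x \neq 0$ and $\theta_f, \theta_g$ are isometries. Applying the reconstruction identity of the second frame, $x = \sum_{k=1}^m g_k(x)\,\omega_k = \sum_{k \in T} g_k(x)\,\omega_k$, and evaluating the functional $f_j$ gives $f_j(x) = \sum_{k \in T} g_k(x)\, f_j(\omega_k)$ for every $j$. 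Writing $M_f := \max_{1\le j\le n,\,1\le k\le m}|f_j(\omega_k)|$, for each $j \in S$ the triangle inequality yields $|f_j(x)| \le M_f \sum_{k \in T} |g_k(x)|$.

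Next I would apply H\"older's inequality with exponents $p$ and $q$ over the finite set $T$: $\sum_{k \in T} |g_k(x)| \le |T|^{1/q}\bigl(\sum_{k \in T}|g_k(x)|^p\bigr)^{1/p} = |T|^{1/q}\bigl(\sum_{k=1}^m |g_k(x)|^p\bigr)^{1/p} = \|\theta_g x\|_0^{1/q}\,\|x\|$, where the middle equality uses that coordinates outside $T$ vanish and the last equality is property (i) of the second frame. Hence $|f_j(x)| \le M_f\,\|\theta_g x\|_0^{1/q}\,\|x\|$ for all $j \in S$. Raising to the $p$-th power, summing over $j \in S$, and invoking property (i) of the first frame in the form $\|x\|^p = \sum_{j=1}^n |f_j(x)|^p = \sum_{j \in S}|f_j(x)|^p$, I obtain $\|x\|^p \le \|\theta_f x\|_0\, M_f^{\,p}\, \|\theta_g x\|_0^{p/q}\,\|x\|^p$. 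Dividing by $\|x\|^p \neq 0$ and taking $p$-th roots (using $(p/q)\cdot(1/p) = 1/q$) gives $\|\theta_f x\|_0^{1/p}\,\|\theta_g x\|_0^{1/q} \ge 1/M_f$, which is the first inequality in \eqref{FDSUNCE}. The second inequality follows verbatim after interchanging the two frames, i.e.\ expanding $x = \sum_{j=1}^n f_j(x)\tau_j$ and evaluating $g_k$.

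I do not anticipate a genuine obstacle: the argument is a short chain of elementary estimates. The only places requiring care are (a) performing the H\"older step over the support $T$ rather than over all of $[m]$, since this is exactly what produces $\|\theta_g x\|_0$ instead of $m$ and thus sharpens the bound, and (b) the bookkeeping of the conjugate index, namely $p/q = p-1$ so that the $p$-th root of $\|\theta_g x\|_0^{p/q}$ is $\|\theta_g x\|_0^{1/q}$.
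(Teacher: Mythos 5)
Your argument is correct and is essentially identical to the paper's proof: both expand $f_j(x)$ via the reconstruction formula of the other frame, restrict to the supports, and apply the triangle inequality followed by H\"older's inequality over the support set to produce the factor $\|\theta_g x\|_0^{1/q}$, then symmetrize for the second inequality. No substantive difference in route or technique.
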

\begin{proof}
	Let $x \in \mathcal{X}\setminus\{0\}$ and $q$ be the conjugate index of $p$. First using $\theta_f$ is an isometry and later using $\theta_g$ is an isometry, we get 
	\begin{align*}
\|x\|^p&=\|\theta_fx\|^p=\sum_{j=1}^n|f_j(x)|^p=\sum_{j \in \operatorname{supp}(\theta_fx)}|f_j(x)|^p\\
&=\sum_{j\in \operatorname{supp}(\theta_fx)}\left|f_j\left(\sum_{k=1}^mg_k(x)\omega_k\right)\right|^p
=\sum_{j \in \operatorname{supp}(\theta_fx)}\left|\sum_{k=1}^mg_k(x)f_j(\omega_k)\right|^p\\
&=\sum_{j \in \operatorname{supp}(\theta_fx)}\left|\sum_{k \in  \operatorname{supp}(\theta_gx)}g_k(x)f_j(\omega_k)\right|^p\leq \sum_{j \in \operatorname{supp}(\theta_fx)}\left(\sum_{k \in  \operatorname{supp}(\theta_gx)}|g_k(x)f_j(\omega_k)|\right)^p\\
&\leq \left(\displaystyle\max_{1\leq j\leq n, 1\leq k\leq m}|f_j(\omega_k)|\right)^p\sum_{j \in \operatorname{supp}(\theta_fx)}\left(\sum_{k \in  \operatorname{supp}(\theta_gx)}|g_k(x)|\right)^p\\
&=\left(\displaystyle\max_{1\leq j\leq n, 1\leq k\leq m}|f_j(\omega_k)|\right)^p \|\theta_f x\|_0\left(\sum_{k \in  \operatorname{supp}(\theta_gx)}|g_k(x)|\right)^p\\
&\leq \left(\displaystyle\max_{1\leq j\leq n, 1\leq k\leq m}|f_j(\omega_k)|\right)^p \|\theta_f x\|_0\left(\sum_{k \in  \operatorname{supp}(\theta_gx)}|g_k(x)|^p\right)^\frac{p}{p}\left(\sum_{k \in  \operatorname{supp}(\theta_gx)}1^q\right)^\frac{p}{q}\\
&=\left(\displaystyle\max_{1\leq j\leq n, 1\leq k\leq m}|f_j(\omega_k)|\right)^p \|\theta_f x\|_0\|\theta_g x\|^p\|\theta_g x\|_0^\frac{p}{q}\\
&=\left(\displaystyle\max_{1\leq j\leq n, 1\leq k\leq m}|f_j(\omega_k)|\right)^p \|\theta_f x\|_0\|x\|^p\|\theta_g x\|_0^\frac{p}{q}.
	\end{align*}
Therefore 
\begin{align*}
	\frac{1}{\displaystyle\max_{1\leq j\leq n, 1\leq k\leq m}|f_j(\omega_k)|}\leq \|\theta_f x\|_0^\frac{1}{p}\|\theta_g x\|_0^\frac{1}{q}.
\end{align*}
On the other way, first using $\theta_g$ is an isometry and $\theta_f$ is an isometry, we get

\begin{align*}
\|x\|^p&=\|\theta_gx\|^p=\sum_{k=1}^m|g_k(x)|^p=\sum_{k \in \operatorname{supp}(\theta_gx)}|g_k(x)|^p\\
&=\sum_{k\in \operatorname{supp}(\theta_gx)}\left|g_k\left(\sum_{j=1}^nf_j(x)\tau_j\right)\right|^p
=\sum_{k \in \operatorname{supp}(\theta_gx)}\left|\sum_{j=1}^nf_j(x)g_k(\tau_j)\right|^p\\
&=\sum_{k \in \operatorname{supp}(\theta_gx)}\left|\sum_{j\in \operatorname{supp}(\theta_fx)}f_j(x)g_k(\tau_j)\right|^p\leq \sum_{k \in \operatorname{supp}(\theta_gx)}\left(\sum_{j\in \operatorname{supp}(\theta_fx)}|f_j(x)g_k(\tau_j)|\right)^p\\
&\leq \left(\displaystyle\max_{1\leq j\leq n, 1\leq k\leq m}|g_k(\tau_j)|\right)^p\sum_{k \in \operatorname{supp}(\theta_gx)}\left(\sum_{j\in \operatorname{supp}(\theta_fx)}|f_j(x)|\right)^p\\
&=\left(\displaystyle\max_{1\leq j\leq n, 1\leq k\leq m}|g_k(\tau_j)|\right)^p\|\theta_g x\|_0\left(\sum_{j\in \operatorname{supp}(\theta_fx)}|f_j(x)|\right)^p\\
&\leq \left(\displaystyle\max_{1\leq j\leq n, 1\leq k\leq m}|g_k(\tau_j)|\right)^p\|\theta_g x\|_0\left(\sum_{j\in \operatorname{supp}(\theta_fx)}|f_j(x)|^p\right)^\frac{p}{p}\left(\sum_{j\in \operatorname{supp}(\theta_fx)}1^q\right)^\frac{p}{q}\\
&=\left(\displaystyle\max_{1\leq j\leq n, 1\leq k\leq m}|g_k(\tau_j)|\right)^p\|\theta_g x\|_0\|\theta_f x\|^p\|\theta_f x\|_0^\frac{p}{q}\\
&=\left(\displaystyle\max_{1\leq j\leq n, 1\leq k\leq m}|g_k(\tau_j)|\right)^p\|\theta_g x\|_0\|x\|^p\|\theta_f x\|_0^\frac{p}{q}.
\end{align*}
Therefore 
\begin{align*}
\frac{1}{\displaystyle\max_{1\leq j\leq n, 1\leq k\leq m}|g_k(\tau_j)|}\leq \|\theta_g x\|_0^\frac{1}{p}\|\theta_f x\|_0^\frac{1}{q}.
\end{align*}
\end{proof}
\begin{corollary}
	Theorem \ref{RT} follows from Theorem \ref{FDS}.
\end{corollary}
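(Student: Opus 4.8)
The plan is to exhibit any Parseval frame in a finite dimensional Hilbert space as a $2$-Schauder frame in the sense of Definition \ref{PSF}, and then to read off Theorem \ref{RT} from Theorem \ref{FDS} in the self-conjugate case $p=q=2$. Let $\mathcal{H}$ be a finite dimensional Hilbert space, which is in particular a finite dimensional Banach space, and let $\{\tau_j\}_{j=1}^n$, $\{\omega_j\}_{j=1}^n$ be Parseval frames for $\mathcal{H}$. First I would set $f_j \coloneqq \langle \cdot, \tau_j\rangle \in \mathcal{H}^*$ and $g_k \coloneqq \langle \cdot, \omega_k\rangle \in \mathcal{H}^*$, and observe that with this choice the coefficient maps of Theorem \ref{EB}/\ref{RT} coincide with those of Theorem \ref{FDS}, namely $\theta_\tau = \theta_f$ and $\theta_\omega = \theta_g$, so that $\|\theta_\tau h\|_0 = \|\theta_f h\|_0$ and $\|\theta_\omega h\|_0 = \|\theta_g h\|_0$ for every $h \in \mathcal{H}$.

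Second, I would verify that $(\{f_j\}_{j=1}^n, \{\tau_j\}_{j=1}^n)$ is a $2$-Schauder frame for $\mathcal{H}$. Condition (i) of Definition \ref{PSF} with $p=2$ is precisely the Parseval identity $\|h\|^2 = \sum_{j=1}^n |\langle h, \tau_j\rangle|^2$, which holds by hypothesis. Condition (ii) with $p=2$ is the reconstruction formula $h = \sum_{j=1}^n \langle h, \tau_j\rangle \tau_j$, i.e.\ that the frame operator equals $I_\mathcal{H}$; this is the standard consequence of the Parseval property (equivalently, it follows from Theorem \ref{THAFSCHAR} with $U=\theta_\tau$ an isometry and $V = \theta_\tau^*$, using $\theta_\tau^* \theta_\tau = I_\mathcal{H}$). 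The same reasoning shows $(\{g_k\}_{k=1}^n, \{\omega_k\}_{k=1}^n)$ is a $2$-Schauder frame for $\mathcal{H}$.

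Third, I would apply Theorem \ref{FDS} with $p=2$, hence $q=2$ and $m=n$. Since $f_j(\omega_k) = \langle \omega_k, \tau_j\rangle$, we have $\displaystyle\max_{1\le j,k\le n}|f_j(\omega_k)| = \max_{1\le j,k\le n}|\langle \tau_j, \omega_k\rangle|$ (only the moduli enter, so the complex-conjugate bookkeeping is harmless), and likewise for the $g_k(\tau_j)$ term. Inequality (\ref{FDSUNCE}) then reads
\begin{align*}
\|\theta_\tau h\|_0^{\frac12}\|\theta_\omega h\|_0^{\frac12} \ \geq\ \frac{1}{\displaystyle\max_{1\le j,k\le n}|\langle \tau_j, \omega_k\rangle|}, \qquad \forall\, h \in \mathcal{H}\setminus\{0\}.
\end{align*}
Squaring both sides yields $\|\theta_\tau h\|_0\,\|\theta_\omega h\|_0 \ge \big(\max_{1\le j,k\le n}|\langle \tau_j, \omega_k\rangle|\big)^{-2}$, and the elementary AM--GM inequality $\big(\tfrac{a+b}{2}\big)^2 \ge ab$ with $a = \|\theta_\tau h\|_0$, $b = \|\theta_\omega h\|_0$ supplies the left-hand inequality of Theorem \ref{RT}. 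I do not anticipate a genuine obstacle here: the argument is a direct specialization, and the only points needing a word of care are the identification $\theta_\tau = \theta_f$, the conjugate-symmetry remark on the maxima, and the fact that Theorem \ref{RT} is stated for two frames of equal cardinality $n=m$, which is merely a special case of the hypotheses of Theorem \ref{FDS}.
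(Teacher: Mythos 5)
Your proposal is correct and follows essentially the same route as the paper: define $f_j=\langle\cdot,\tau_j\rangle$, $g_k=\langle\cdot,\omega_k\rangle$, check these give $2$-Schauder frames via the Parseval identity and the reconstruction formula, identify $\theta_\tau=\theta_f$, $\theta_\omega=\theta_g$ and $|f_j(\omega_k)|=|\langle\omega_k,\tau_j\rangle|$, and specialize Theorem \ref{FDS} to $p=q=2$. You merely spell out the verification of Definition \ref{PSF} and the squaring/AM--GM step, which the paper leaves implicit.
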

\begin{proof}
	Let $\{\tau_j\}_{j=1}^n$,  $\{\omega_j\}_{j=1}^n$ be two Parseval frames   for a  finite dimensional Hilbert space $\mathcal{H}$.	Define 
	\begin{align*}
		f_j:\mathcal{H} \ni h \mapsto \langle h, \tau_j \rangle \in \mathbb{K}; \quad g_j:\mathcal{H} \ni h \mapsto \langle h, \omega_j \rangle \in \mathbb{K}, \quad \forall 1\leq j\leq n.
	\end{align*}
Then $p=q=2$, $\theta_\tau=\theta_f$, $\theta_\omega=\theta_g$ and 
\begin{align*}
|f_j(\omega_k)|=|\langle \omega_k, \tau_j \rangle |, \quad \forall 1\leq j, k \leq n.
\end{align*}
\end{proof}
Theorem  \ref{FDS}  brings the following question.
\begin{question}
	Given $p$, $m$, $n$ and a Banach space $\mathcal{X}$, for which pairs of p-Schauder frames $(\{f_j\}_{j=1}^n, \{\tau_j\}_{j=1}^n)$ and $(\{g_k\}_{k=1}^m, \{\omega_k\}_{k=1}^m)$, we have equality in Inequality (\ref{FDSUNCE})?
\end{question}
We now give Banach space version of Theorem \ref{APP}. For this, we need the notion of p-orthonormal bases for Banach spaces. 
\begin{definition} \cite{KRISHNA} \label{PONB}
	Let $\mathcal{X}$  be a  finite dimensional Banach space over $\mathbb{K}$.   Let $\{\tau_j\}_{j=1}^n$ be a basis for   $\mathcal{X}$ and 	let $\{f_j\}_{j=1}^n$ be the coordinate functionals associated with $\{\tau_j\}_{j=1}^n$. The pair $(\{f_j\}_{j=1}^n, \{\tau_j\}_{j=1}^n)$ is said to be a \textbf{p-orthonormal basis} ($1<p <\infty$) for $\mathcal{X}$ if  the following conditions hold.
	\begin{enumerate}[\upshape(i)]
		\item $\|f_j\|=\|\tau_j\|=1$ for all $1\leq j\leq n$.
		\item For every $(a_j)_{j=1}^n \in \mathbb{K}^n$, 
		\begin{align*}
			\left\|\sum_{j=1}^na_j\tau_j \right\|=\left(\sum_{j=1}^n|a_j|^p\right)^\frac{1}{p}.
		\end{align*}
	\end{enumerate}
\end{definition}
\begin{theorem}
	Let $(\{f_j\}_{j=1}^n, \{\tau_j\}_{j=1}^n)$,  $(\{g_k\}_{k=1}^n, \{\omega_k\}_{k=1}^n)$ be two p-orthonormal  bases for a  finite dimensional Banach space $\mathcal{X}$. If $x \in \mathcal{X}\setminus\{0\}$ can be written as 
\begin{align*}
	x=\theta_\tau u=\theta_\omega v \quad \text{ for some } u, v \in \mathbb{K}^n, 
\end{align*}
then 
\begin{align*}
		\|u\|_0^\frac{1}{p}\|v\|_0^\frac{1}{q} \geq 	\frac{1}{\displaystyle\max_{1\leq j\leq n, 1\leq k\leq m}|f_j(\omega_k)|}\quad \text{and} \quad \|v\|_0^\frac{1}{p}\|u\|_0^\frac{1}{q}\geq \frac{1}{\displaystyle\max_{1\leq j\leq n, 1\leq k\leq m}|g_k(\tau_j)|}.
\end{align*}
\end{theorem}
\begin{proof}
	Let $x \in \mathcal{X}\setminus\{0\}$ has representation 
	\begin{align*}
		x=\theta_\tau u=\theta_\omega v \quad \text{ for some } u, v \in \mathbb{K}^n.
	\end{align*}
	Since $(\{f_j\}_{j=1}^n, \{\tau_j\}_{j=1}^n)$ and   $(\{g_k\}_{k=1}^n, \{\omega_k\}_{k=1}^n)$ are p-orthonormal bases, we have 
	\begin{align*}
		\theta_f \theta_\tau =I_{\mathbb{K}^n}, \quad 	\theta_g \theta_\omega =I_{\mathbb{K}^n}.
	\end{align*}
 Using previous equations, we get 
 \begin{align*}
 	u=	\theta_f \theta_\tau u=\theta_f x, \quad v=	\theta_g \theta_\omega v=\theta_g x.
 \end{align*}
Therefore, by using Theorem \ref{FDS}
\begin{align*}
	&\|u\|_0^\frac{1}{p}\|v\|_0^\frac{1}{q} =	\|\theta_f x \|_0^\frac{1}{p}\|\theta_g x\|_0^\frac{1}{q}\geq 	\frac{1}{\displaystyle\max_{1\leq j\leq n, 1\leq k\leq m}|f_j(\omega_k)|},\\
	& \|v\|_0^\frac{1}{p}\|u\|_0^\frac{1}{q}=\|\theta_g x\|_0^\frac{1}{p}\|\theta_ f x\|_0^\frac{1}{q}\geq \frac{1}{\displaystyle\max_{1\leq j\leq n, 1\leq k\leq m}|g_k(\tau_j)|}.
\end{align*}
\end{proof}

\section{Open problem}
It is well-known  that Donoho-Stark uncertainty principle cannot be improved \cite{DONOHOSTARK}. However,  Donoho-Stark uncertainty principle gives (using AM-GM inequality) that 
\begin{align}\label{DSUPI}
	 \|h\|_0+\|\widehat{h}\|_0	\geq 2 \sqrt{d} , \quad \forall h \in \mathbb{C}^d\setminus \{0\}, \forall d \in \mathbb{N}.
\end{align}
In 2005, Tao improved Inequality (\ref{DSUPI}) in prime dimensions \cite{TAO}. 
\begin{theorem} \textbf{(Tao Uncertainty Principle)} \cite{TAO} 
Let $p \in \mathbb{N}$ be a prime number. Then 	
\begin{align*}
	 \|h\|_0+\|\widehat{h}\|_0	\geq p+1\geq 2 \sqrt{p}, \quad \forall h \in \mathbb{C}^p\setminus \{0\}.
\end{align*}
\end{theorem}
Motivated from Tao uncertainty principle, we ask the following problem.
\begin{problem}
	What is the version  Tao uncertainty principle for Banach spaces of prime dimension?
\end{problem}

 \bibliographystyle{plain}
 \bibliography{reference.bib}

\begin{thebibliography}{10}

\bibitem{CASAZZADILWORTH}
P.~G. Casazza, S.~J. Dilworth, E.~Odell, Th. Schlumprecht, and A.~Zs{\'a}k.
\newblock Coefficient quantization for frames in {Banach} spaces.
\newblock {\em J. Math. Anal. Appl.}, 348(1):66--86, 2008.

\bibitem{DAUBECHIESDEVORE}
Ingrid Daubechies and Ronald DeVore.
\newblock Approximating a bandlimited function using very coarsely quantized
  data: a family of stable sigma-delta modulators of arbitrary order.
\newblock {\em Ann. Math. (2)}, 158(2):679--710, 2003.

\bibitem{DEUTSCH}
David Deutsch.
\newblock Uncertainty in quantum measurements.
\newblock {\em Phys. Rev. Lett.}, 50(9):631--633, 1983.

\bibitem{DONOHOSTARK}
David~L. Donoho and Philip~B. Stark.
\newblock Uncertainty principles and signal recovery.
\newblock {\em SIAM J. Appl. Math.}, 49(3):906--931, 1989.

\bibitem{ELADBOOK}
Michael Elad.
\newblock {\em Sparse and redundant representations. {From} theory to
  applications in signal and image processing.}
\newblock New York, NY: Springer, 2010.

\bibitem{ELADBRUCKSTEIN}
Michael Elad and Alfred~M. Bruckstein.
\newblock A generalized uncertainty principle and sparse representation in
  pairs of bases.
\newblock {\em IEEE Trans. Inform. Theory}, 48(9):2558--2567, 2002.

\bibitem{GHOBBERJAMING}
Saifallah Ghobber and Philippe Jaming.
\newblock On uncertainty principles in the finite dimensional setting.
\newblock {\em Linear Algebra Appl.}, 435(4):751--768, 2011.

\bibitem{GOHGOODMAN}
Say~Song Goh and Tim N.~T. Goodman.
\newblock Uncertainty principles in {Banach} spaces and signal recovery.
\newblock {\em J. Approx. Theory}, 143(1):26--35, 2006.

\bibitem{HEISENBERG}
W.~Heisenberg.
\newblock The physical content of quantum kinematics and mechanics.
\newblock In John~Archibald Wheeler and Wojciech~Hubert Zurek, editors, {\em
  Quantum Theory and Measurement}, Princeton Series in Physics, pages 62--84.
  Princeton University Press, Princeton, NJ, 1983.

\bibitem{HOLUB}
James~R. Holub.
\newblock Pre-frame operators, {B}esselian frames, and near-{R}iesz bases in
  {H}ilbert spaces.
\newblock {\em Proc. Amer. Math. Soc.}, 122(3):779--785, 1994.

\bibitem{KRISHNA3}
K.~Mahesh Krishna.
\newblock Functional {Deutsch} uncertainty principle.
\newblock {\em J. Class. Anal.}, 23(1):11--18, 2024.

\bibitem{KRISHNA}
K.~Mahesh Krishna.
\newblock Functional {Ghobber}-{Jaming} uncertainty principle.
\newblock {\em Math. Notes}, 116(5):1064--1071, 2024.

\bibitem{KRISHNAJOHNSON}
K.~Mahesh Krishna and P.~Sam Johnson.
\newblock Towards characterizations of approximate {S}chauder frame and its
  duals for {B}anach spaces.
\newblock {\em J. Pseudo-Differ. Oper. Appl.}, 12(1):Paper No. 9, 13, 2021.

\bibitem{RICAUDTORRESANI}
Benjamin Ricaud and Bruno Torr\'{e}sani.
\newblock Refined support and entropic uncertainty inequalities.
\newblock {\em IEEE Trans. Inform. Theory}, 59(7):4272--4279, 2013.

\bibitem{SMITH}
Kennan~T. Smith.
\newblock The uncertainty principle on groups.
\newblock {\em SIAM J. Appl. Math.}, 50(3):875--882, 1990.

\bibitem{TAO}
Terence Tao.
\newblock An uncertainty principle for cyclic groups of prime order.
\newblock {\em Math. Res. Lett.}, 12(1):121--127, 2005.

\end{thebibliography}

\end{document}